\newtheorem{theorem}{\bf Theorem}
\newtheorem{lemma}{\bf Lemma}
\newtheorem{corollary}{\bf Corollary}
\newtheorem{remark}{\bf Remark}
\newtheorem{claim}{\bf Claim}
\newcommand{\vnorm}[1]{\left|\left|#1\right|\right|}
\definecolor{Red}{rgb}{1,0,0}
\definecolor{Blue}{rgb}{0,0,1}
\definecolor{Green}{rgb}{0,1,0}
\definecolor{magenta}{rgb}{1,0,.6}
\definecolor{lightblue}{rgb}{0,.5,1}
\definecolor{lightpurple}{rgb}{.6,.4,1}
\definecolor{gold}{rgb}{.6,.5,0}
\definecolor{orange}{rgb}{1,0.4,0}
\definecolor{hotpink}{rgb}{1,0,0.5}
\definecolor{newcolor2}{rgb}{.5,.3,.5}
\definecolor{newcolor}{rgb}{0,.3,1}
\definecolor{newcolor3}{rgb}{1,0,.35}
\definecolor{darkgreen1}{rgb}{0, .35, 0}
\definecolor{darkgreen}{rgb}{0, .6, 0}
\definecolor{darkred}{rgb}{.75,0,0}
\newcommand{\bigO}[1]{\mathcal{O} \left(#1 \right)}
\newcommand{\ts}{t^\star}
\newcommand{\ds}{\Delta s}
\begin{document}

\title{Robust Distributed Averaging in Networks}

\author{\IEEEauthorblockN{Ali Khanafer\IEEEauthorrefmark{1}, Behrouz Touri\IEEEauthorrefmark{2}, and Tamer Ba\c{s}ar\IEEEauthorrefmark{1}}

\IEEEauthorblockA{\IEEEauthorrefmark{1}Coordinated Science Laboratory, University of Illinois at Urbana-Champaign, USA, Email:\url{{khanafe2,basar1}@illinois.edu}}
\IEEEauthorblockA{\IEEEauthorrefmark{2}ECE Department, Georgia Institue of Technology, Atlanta, GA, USA, Email: \url{touri@gatech.edu}}
\thanks{*This work was supported in part by an AFOSR MURI Grant FA9550-10-1-0573.}
}

\maketitle

\begin{abstract}
In this work, we consider two types of adversarial attacks on a network of nodes seeking to reach consensus. The first type involves an adversary that is capable of breaking a specific number of links at each time instant. In the second attack, the adversary is capable of corrupting the values of the nodes by adding a noise signal. In this latter case, we assume that the adversary is constrained by a power budget. We consider the optimization problem of the adversary and fully characterize its optimum strategy for each scenario.
\end{abstract}


\section{Introduction}
Starting with the work of \cite{TsitsiklisThesis}, distributed computation received increased attention. The core idea behind various distributed decision applications is the ability of individual agents to reach agreement \emph{globally} via \emph{local} interactions. Fields where this idea is key  include flocking and multiagent coordination \cite{BlondelSurvey,SaberFlocking,JadbabaieNNRules}, optimization \cite{LiBasar,NedicOzdaglarParrilo}, and the study of social influence \cite{JacksonGolub}.

As a particular example, consensus averaging involves agents who attempt to converge to the average of their initial measurements through local averaging. Consensus problems can be formulated in both discrete and continuous time. Consensus protocols find many applications in sensor networks where sensors collaborate distributively to make measurements of a certain quantity, such as the temperature in a field. The convergence of consensus algorithms has been studied widely; see \cite{BlondelSurvey}.

The convergence of consensus protocols under the effect of non-idealities have also been studied in the literature. \cite{KashyapBasarSrikant} study the convergence properties of pairwise gossip under the constraint that agents can only store integer values. Consensus in networks with noisy links was explored by \cite{XiaoBoyd} and \cite{TouriNedic}. \cite{SarwateDimakis} consider the case where the nodes are allowed to be mobile. The effects of switching topologies and time delays were considered in \cite{SaberMurray}, \cite{NedicOzdaglar}, and \cite{TouriNedicTAC1,TouriNedicArXiv,TouriNedicCDC}.

Here, we study the problem of continuous-time consensus averaging in the presence of an intelligent adversary. We consider two network-wide attacks launched by an adversary attempting to hinder the convergence of the nodes to consensus. The adversarial attacks we explore here differ from the ones studied by \cite{Sundaram}, \cite{BicchiBullo}, and \cite{SandbergJohansson}, who consider the effect of malicious and compromised agents who could update their values arbitrarily. In the first scenario (called \textsc{Attack-I}) we consider, the adversary can break a set of edges in the network at each time instant. In practice, the adversary would be limited in its resources; we translate this practical limitation to a hard constraint on the total number of links the adversary can compromise at each time instant. In the second case (called \textsc{Attack-II}), the adversary can corrupt the measurements of the nodes by injecting a signal under a maximum power constraint. Our goal is to study the optimal behavior of the adversary in each case, given the imposed constraints.

For both attacks, we formulate the problem of the adversary as a finite horizon maximization problem in which the adversary seeks to maximize the Euclidean distance between the nodes' state and the consensus line. We use Pontryagin's maximum principle (MP) to completely characterize the optimal strategy of the adversary under both attacks;  for each case we obtain a closed-form solution, providing also a potential-theoretic interpretation of the adversary's optimal strategy in \textsc{Attack-I}. Furthermore, we support our findings with numerical studies.

In Sect~\ref{AttackI}, we describe \textsc{Attack-I} and formulate and solve the adversary's problem. We study \textsc{Attack-II} in Sect~\ref{AttackII}, present simulation results in Sect~\ref{Simulations}, and conclude in Sect~\ref{Conclusion}.

\section{Attack I: An Adversary Capable of Breaking Links} \label{AttackI}
Consider a connected network of $n$ nodes described by an undirected graph  $\mathcal{G} = (\mathcal{N},\mathcal{E})$, where $\mathcal{N}$ is the set of vertices, and $\mathcal{E}$ is the set of edges with $|\mathcal{E}| = m$. The nodes of the network are the vertices of $\mathcal{G}$, i.e., $|\mathcal{N}| = n$; we will denote an edge in $\mathcal{E}$ between nodes $i$ and $j$ by $(i,j)$. The value, or state, of the nodes at time instant $t$ is given by $x(t) = [x_1(t),...,x_n(t)]^T$. The nodes start with an initial value $x(0)=x_0$, and they are interested in computing the average of their initial measurements $x_{avg} = \frac{1}{n}\sum_{i=1}^n x_i(0)$ via local averaging. We consider the continuous-time averaging dynamics given by
\[
\dot{x}(t) = A(t)x(t), \quad x(0) = x_0,
\]
where the rows of the matrix $A(t)$ sum to zero and its off diagonal elements are nonnegative. We also assume that $A(t)$ is symmetric. Further, we define $\bar{x} = \mathbf{1}x_{avg}$ and let $M = \frac{\mathbf{1}\mathbf{1}^T}{n}$. A well known result states that, given the above assumptions, the nodes will reach consensus, i.e.,  $\lim_{t\to \infty} x(t) =\bar{x}$. 

An adversary attempts to slow down convergence. At each time instant, he can break at most $\ell \leq m$ links and wishes to select the links which will cause the most harm. Let $u_{ij}(t) \in \{0,1\}$ be the weight the adversary assigns to link $(i,j)$. He breaks link $(i,j)$ at time $t$ when $u_{ij}(t) = 1$. His control is given by $u(t)=[u_{12}(t),u_{13}(t),...,u_{1n}(t),u_{23}(t),...,u_{(n-1)n}(t)]^T$. If $(i,j) \notin \mathcal{E}$, then $u_{ij}(t) = 0$ for all $t$. We will denote the number of links the adversary breaks at time $t$ by $N_u(t)$. Then, $N_u(t) = |u(t)|^2$, where $|.|$ denotes the Euclidean norm.  In accordance with the above, the strategy space of the adversary is
\[
\mathcal{U} = \left\{u(t) \in \{0,1\}^{n\choose2}: N_u(t) \leq \ell \right\}
\]. 

The objective function of the adversary is:
\begin{equation*}
J(u) = \int_0^T k(t)\left| x(t)-\bar{x}\right|^2 dt,
\end{equation*}
where the kernel $k(t)$ is positive and integrable over $[0,T]$. The adversary's problem can be formulated as follows:
\begin{eqnarray}
& \max_{u(t)\in \mathcal{U}} & \quad J(u) \label{prb::JammerPrb} \\
& \text{s.t.} & \dot{x}(t) = A(t)x(t),\quad A_{ii}(t) =  - \sum_{j=1,j\neq i}^n A_{ij}(t),\nonumber \\
&&A_{ij}(t) = A_{ji}(t) = a_{ij}\left(1-u_{ij}(t)\right),\nonumber\\
&& a_{ij} \geq 0, a_{ij} >0 \Leftrightarrow (i,j) \in \mathcal{E}. \nonumber
\end{eqnarray}
The Hamiltonian is then given by:
\[
H(x,p,u) = k(t)\left|x(t)-\bar{x}\right|^2 + p(t)^TA(t)x(t).
\]
The first-order necessary conditions for optimality are:
\begin{eqnarray}
\dot{p}(t) & = & -\frac{\partial }{\partial x}H(x,p,u) \nonumber \\
& = & -2k(t)(x(t)-\bar{x}) - A(t)^Tp(t), \quad p(T) = 0 \label{eqn::ODEp1} \\
\dot{x}(t) & = & A(t)x(t), \quad x(0) = x_0 \label{eqn::ODEx1} \\
u^\star & = & \arg \max\left\{H(x,u,p,t) : u \in \mathcal{U}\right\}, \nonumber
\end{eqnarray}
where $x(t),p(t) \in C^1[0,T]$, the space of continuously differentiable functions over $[0,T]$.
Let $\Phi_x(t,0)$ and $\Phi_p(t,0)$ be the state transition matrices corresponding to $x(t)$ and $p(t)$, respectively. Then, the solutions to ODEs (\ref{eqn::ODEp1}) and (\ref{eqn::ODEx1}) are:
\begin{eqnarray}
x(t) & = &  \Phi_x(t,0)x_0  \label{eqn::xDyn}, \\
p(t) & = & \Phi_p(t,0)p(0) - 2\int_0^t [\Phi_p(t,\tau)k(\tau)(x(\tau)-\bar{x})]d\tau. \nonumber
\end{eqnarray}
Using the terminal condition $p(T) = 0$, we can write:
\[
p(0) = 2\int_0^T \Phi_p(0,\tau)k(\tau)\left[\Phi_x(\tau,0)x_0-\bar{x}\right]d\tau.
\]
We therefore have
\begin{equation}
 p(t) = 2\int_t^T \Phi_p(t,\tau)k(\tau)\left[\Phi_x(\tau,0)x_0-\bar{x}\right]d\tau. \label{eqn::pDyn}
\end{equation}
Let us write
\begin{eqnarray*}
&&\hspace{-3.5mm} p(t)^TA(t)x(t) = \sum_{i=1}^n p_i(t) \left(\sum_{j=1}^n A_{ij}(t)x_j(t) \right) \\
&&\hspace{-3.5mm} =  \sum_{i=1}^n p_i(t) \left(- \sum_{j=1,j\neq i}^n a_{ij}u_{ij}(t)x_i(t) + \sum_{j=1, j\neq i}^n a_{ij}u_{ij}(t)x_j(t) \right) \\
&&\hspace{-3.5mm} = \sum_{i=1}^n \sum_{j=1, j\neq i}^n u_{ij}(t)a_{ij}p_i(t)(x_j(t) - x_i(t)) \\
&&\hspace{-3.5mm} = \sum_{j=2}^{n}\sum_{i=1}^{j-1} u_{ij}(t)a_{ij}(p_j(t)-p_i(t))(x_i(t) - x_j(t)).\\
\end{eqnarray*}
We further have
\begin{eqnarray*}
&& \max_{u(t) \in \mathcal{U}} H(x,p,u) = \max_{u(t) \in \mathcal{U}} k(t)\left|x(t)-\bar{x}\right|^2 + p(t)^TA(t)x(t)\\
&& = k(t)\left|x(t)-\bar{x}\right|^2 + \sum_{j=2}^{n}\sum_{i=1}^{j-1} \max_{u_{ij}(t) \in \{0,1\}}u_{ij}(t)f_{ij}(A,x,p),
\end{eqnarray*}
where $f_{ij}(A,x,p) = a_{ij}(p_j(t)-p_i(t))(x_i(t) - x_j(t))$. Let $(f_1,...,f_m) = \pi(f)$ be a nondecreasing ordering of the $f_{ij}$'s. Define the subset of edges $\mathcal{\tilde{I}}_t$ as follows: $\tilde{\mathcal{I}}_t = \left\{(i,j) \in \mathcal{N}: f_{ij} < 0 \text{ and } f_{ij} \leq f_{\ell+1} \right\}$. Further, let $\mathcal{I}_t$ be the set containing the $\ell$ elements of $\mathcal{\tilde{I}}_t$ having the smallest values. Hence, we conclude that the optimal control is:
\begin{eqnarray}
u^\star_{ij}(t) =  \left\{
  \begin{array}{l l}
    1 & \quad \text{if $(i,j) \in \mathcal{I}_t$}  \\
    0 & \quad \text{if $f_{ij} > 0$} \label{eqn::OptCtrlMP}\\
    \{0,1\} & \quad \text{if $f_{ij} = 0$} 
  \end{array} \right.
\end{eqnarray}
The functions $f_{ij}$ depend on both the state and the co-state, which in turn are defined in terms of the control. This makes it hard to obtain a closed-form solution for the control. However, in the following we consider the utility of the adversary which allows us to completely characterize his optimal strategy; we will be using the term "connected component" to refer to a set of connected nodes which have the same values. Let $w_{ij}(t) := a_{ij}(x_j(t) - x_i(t))^2$.
\begin{theorem} \label{thm::main}
For all $t$, the optimal strategy of the adversary, $u^\star(t)$, is to break $\ell$ links with the highest $w_{ij}(t)$ values. Furthermore, if the adversary has an optimal strategy of breaking less than $\ell$ links, then either $\mathcal{G}$ has a cut of size less than $\ell$ or the nodes have reached consensus at time $t$. In either of the cases, breaking $\ell$ links is also optimal.
\end{theorem}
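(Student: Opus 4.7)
The plan is to bypass the co-state representation (\ref{eqn::OptCtrlMP}), in which $f_{ij}$ depends on the adjoint $p$ and thereby obscures the combinatorial structure, and instead work directly with the Lyapunov-like quantity $V(t) := |x(t) - \bar{x}|^2$. Using that $A(t)\bar{x} = 0$ (the rows of $A(t)$ sum to zero and $\bar{x}$ is a constant multiple of $\mathbf{1}$), the symmetry of $A(t)$, and the standard Laplacian quadratic-form identity, differentiating $V$ along (\ref{eqn::ODEx1}) yields
\begin{equation*}
\dot V(t) \;=\; 2(x-\bar x)^T A(t)(x-\bar x) \;=\; -2\sum_{(i,j)\in\mathcal{E}}(1-u_{ij}(t))\,w_{ij}(t).
\end{equation*}

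Substituting $V(t)=V(0)+\int_0^t \dot V(s)\,ds$ into $J(u)=\int_0^T k(t)V(t)\,dt$ and swapping the order of integration via Fubini gives
\begin{equation*}
J(u) \;=\; V(0)\int_0^T k(t)\,dt \;-\; 2\int_0^T K(s)\sum_{(i,j)\in\mathcal{E}}(1-u_{ij}(s))\,w_{ij}(s)\,ds,
\end{equation*}
with $K(s):=\int_s^T k(t)\,dt\ge 0$. Since the first term does not depend on $u$, maximizing $J$ amounts to maximizing $\int_0^T K(s)\sum_{(i,j)}u_{ij}(s)w_{ij}(s)\,ds$ subject to $|u(s)|^2\le\ell$ for every $s$. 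Given the state $x(s)$, the inner problem is a selection with nonnegative weights $w_{ij}(s)\ge 0$; its maximizer (ties broken arbitrarily) is to set $u_{ij}(s)=1$ on the $\ell$ edges with the largest $w_{ij}(s)$, which is exactly the greedy rule of the theorem.

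The main obstacle is that $w_{ij}(s)$ depends on the past trajectory of $u$ through $x(s)$, so the pointwise maximization is not self-justifying. To close this gap I would use a swap/perturbation argument: supposing $u^\star$ is optimal yet fails to be greedy on some set $S$ of positive measure in $[0,T]$, construct a competitor $\tilde{u}$ that agrees with $u^\star$ outside $S$ and plays the greedy choice on $S$. Combining the instantaneous inequality $\dot V^{\tilde u}(s)\ge \dot V^{u^\star}(s)$ on $S$ with a Gronwall-type bound on $|x^{\tilde u}(t)-x^{u^\star}(t)|$ afterwards, one shows that $V^{\tilde u}(t)\ge V^{u^\star}(t)$ for all $t$ beyond the perturbation interval, hence $J(\tilde u)>J(u^\star)$, contradicting optimality; making the Gronwall estimate and the strict inequality precise is the technical crux.

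For the second statement, an optimal strategy breaks fewer than $\ell$ links at time $t$ only if strictly fewer than $\ell$ edges in $\mathcal{E}$ carry $w_{ij}(t)>0$, by the greedy characterization above. Partition the nodes into equivalence classes of equal state values; the edges with $w_{ij}(t)>0$ are precisely the inter-class edges of $\mathcal{G}$. If more than one class is present, these edges form a cut of $\mathcal{G}$ of size less than $\ell$; otherwise all nodes agree and consensus has been reached. In either case, adjoining any additional edges with $w_{ij}(t)=0$ to the broken set leaves $\dot V(t)$ unchanged, so breaking $\ell$ links is equally optimal.
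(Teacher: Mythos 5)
Your reformulation via the dissipation identity $\dot V(t) = 2(x-\bar x)^TA(t)(x-\bar x) = -2\sum_{(i,j)\in\mathcal E}(1-u_{ij}(t))\,w_{ij}(t)$ followed by Fubini is a genuinely different and cleaner way of exposing why $w_{ij}$ is the relevant quantity than the paper's route, which extracts $w_{ij}$ from first-order Taylor expansions of products of matrix exponentials. One correction along the way: since $\sum_{(i,j)}w_{ij}(s)$ itself depends on $u$ through $x(s)$, maximizing $J$ is equivalent to \emph{minimizing} $\int_0^T K(s)\sum_{(i,j)}(1-u_{ij}(s))w_{ij}(s)\,ds$, not to maximizing $\int_0^T K(s)\sum_{(i,j)}u_{ij}(s)w_{ij}(s)\,ds$; the two coincide only in the frozen-state pointwise problem. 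You do flag that the pointwise greedy rule is not self-justifying, and your swap construction is essentially the same perturbation idea the paper uses. The treatment of the second assertion (fewer than $\ell$ broken links implies consensus or a small cut) is fine and matches the paper's case analysis.

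The genuine gap is in how you propose to close the swap argument. Gronwall gives $|x^{\tilde u}(t)-x^{u^\star}(t)|=O(|S|)$ after the perturbation window $S$, but the downstream change in the objective, $\int_{t_S}^{T} K(s)\sum_{(i,j)}(1-u_{ij}(s))\bigl(w_{ij}^{\tilde u}(s)-w_{ij}^{u^\star}(s)\bigr)\,ds$, is then also $O(|S|)$ --- the \emph{same order} as the gain $K\delta|S|+O(|S|^2)$ you accrue on $S$ itself. A magnitude estimate therefore cannot determine the sign of $J(\tilde u)-J(u^\star)$; you must establish the \emph{sign} of the downstream term. Moreover, the ordering $V^{\tilde u}(t_S)\ge V^{u^\star}(t_S)$ is not preserved by the flow in general: the larger deviation could lie in faster-decaying eigendirections and the inequality could reverse later. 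This is precisely what the second half of the paper's proof is for: it expands the composed transition matrices over the subsequent interval, uses commutation to first order to get $F_1-F_2 = 2\bigl((s+\Delta s)-t^\star\bigr)(B-A)+O(\Delta s^2)$, and concludes via the Laplacian quadratic-form identity that the downstream difference equals $2\bigl((s+\Delta s)-t^\star\bigr)A_{ij}\bigl(x_j(s)-x_i(s)\bigr)^2>0$, i.e., the downstream effect is itself favorable to leading order, not merely small. Your proof needs an analogous sign argument (that computation, or a genuine comparison principle for $\int K(s)\,(-\dot V(s))\,ds$); as written, the crux is named but not proved.
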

\begin{proof}
We first characterize $N_{u^\star}(t)$, for all $t$. Because $x(t),p(t) \in C^1[0,T]$, the value of $f_{ij}$ cannot change abruptly in a finite interval. As a result, the control obtained from the MP cannot switch infinitely many times in a finite interval. To this end, let $[s,s+\Delta s]$, $\Delta s > 0$, be a small subinterval of $[0,T]$ over which the adversary applies a stationary strategy $u^A$ such that $N_{u^A} < \ell$ with a corresponding system matrix $A$. Because the control strategy is time-invariant, the state trajectory is given by
\[
x(t) = e^{A(t-s)}x(s), \quad t \in [s,s+\Delta s].
\]
Let $P(t):=e^{At}$. Due to the structure of $A$, $P(t)$ is a doubly stochastic matrix for $t \geq 0$; see \cite{Norris}, p. $63$.

Note that we can write $x(s) = \tilde{P}x_0$, where $\tilde{P}$ is some doubly stochastic matrix. Indeed, assume that the control had switched once at time $\tilde{s} \in [0,s)$, and that the system matrix over $[0,\tilde{s})$ was $\tilde{A}_1$, and the system matrix corresponding to $[\tilde{s},s)$ was $\tilde{A}_2$. Then $x(s) = e^{\tilde{A}_2(s-\tilde{s})}e^{\tilde{A}_1\tilde{s}}x_0$. Because both $e^{\tilde{A}_1t}, e^{\tilde{A}_2t}$ are doubly stochastic matrices, their product is also doubly stochastic. We can readily generalize this result to any number of switches in the interval $[0,s)$. With this observation, we can write
\[
x(t)-\bar{x} =P(t-s)\tilde{P}x_0-Mx_0 = (P(t-s) - M)x(s),
\]
where the last equality follows from the fact that
\begin{equation} \label{prop::M}
\tilde{P}M = M\tilde{P} = M,\text{ $\tilde{P}$ is doubly stochastic}.
\end{equation}
Let $u^B$ be a strategy identical to $u^A$ except at link $(i,j)$, where $u^A_{ij} = 0$ and $u_{ij}^B = 1$. Let the matrix $B$ be the system matrix corresponding to $u^B$, and define the doubly stochastic matrix $Q(t) := e^{Bt}$, $t\geq 0$. It follows that:
\begin{equation}
A_{ij} > B_{ij}=0, \quad A_{kl} = B_{kl} \quad \forall \mathcal{E} \ni (k,l) \neq (i,j).  \label{eqn::AvsB}
\end{equation}
We want to show that switching to strategy $u^B$ at some time $t^\star \in [s,s+\Delta s]$, can improve the utility of the adversary. Formally, we want to prove the following inequality:
\begin{eqnarray*}
&&\int_s^{s+ \Delta s} k(t)\left|(P(t-s)-M)x(s)\right|^2dt  \\
&&< \int_s^{t^\star} k(t)\left|(P(t-s)-M)x(s)\right|^2dt \nonumber \\
&&+ \int_{t^\star}^{s+\Delta s} k(t)\left|(Q(t-t^\star)-M)P(t^\star - s)x(s)\right|^2dt, \nonumber \\
\end{eqnarray*}
or equivalently
\begin{eqnarray}
&&\int_{t^\star}^{s+\Delta s} k(t)\cdot \left[\left|(Q(t-t^\star)-M)P(t^\star-s)x(s)\right|^2\right. \nonumber\\
&&- \left. \left|(P(t-s)-M)x(s)\right|^2\right] dt > 0. \label{eqn::intermediate}
\end{eqnarray}
Using (\ref{prop::M}) and the semi-group property, (\ref{eqn::intermediate}) simplifies to
\begin{equation}
\int_{t^\star}^{s+\Delta s} k(t)\cdot x(s)^T\Lambda(t,t^\star)x(s) dt > 0, \label{ineq::toProve}
\end{equation}
where $\Lambda(t,t^\star)= P(t^\star-s)Q(2(t-t^\star))P(t^\star-s) - P(2(t-s))$.
A sufficient condition for (\ref{ineq::toProve}) to hold is
\begin{equation}
h(t,x(s)) = x(s)^T\Lambda(t,t^\star)x(s)>0, \text{ for } t> t^\star.
\end{equation}
As $t \downarrow 0$, we can write $P(t) = I + tA+\mathcal{O}\left(t^2\right)$, where $\mathcal{O}\left(t^2\right)/t \leq K$ for sufficiently small $t$ and some finite constant $K$. We therefore have
\begin{eqnarray*}
&& \Lambda(t,t^*) = \left(I+(t^\star-s) A+\mathcal{O}\left(t^2\right)\right)\left(I+2(t-t^\star)B \right.\\
&&\left.+\mathcal{O}\left(t^2\right)\right) \left(I+(t^\star-s) A+\mathcal{O}\left(t^2\right)\right) - \left(I+2(t-s)A \right.\\
&&\left. +\mathcal{O}\left(t^2\right)\right) = 2(t-t^\star)B + 2(t^\star-s)A - 2(t-s)A \\
&&+ \mathcal{O}\left(t^2\right) \nonumber = 2(t-t^\star)(B-A) + \mathcal{O}\left(t^2\right). \label{eqn::matApprox}
\end{eqnarray*}
For sufficiently small $t$ and $t^\star$, the first term dominates the second term. For any symmetric $L$ with $L\mathbf{1} = 0$, the quadratic form exhibits the following form: $x^TLx = -\sum_{l=1}^n \sum_{k=1}^{l-1} L_{kl}(x_l-x_k)^2$, for any $x \in \mathbb{R}^n$. Using (\ref{eqn::AvsB}), we can then write
\begin{eqnarray}
h(t,x(s)) & = & 2(t-t^\star) \sum_{l=1}^{n} \sum_{k=1}^{l-1} (A_{kl} - B_{kl})\left(x_l(s)-x_k(s)\right)^2 \nonumber \\
& = & 2(t-t^\star)A_{ij}\left(x_j(s)-x_i(s) \right)^2. \label{eqn::condInit}
\end{eqnarray}
Hence, if there is a link $(i,j)$ such that $x_i(s) \neq x_j(s)$, there exists $t^\star$, $\tilde{t}$ such that $h(t,x(s))>0$ for $t \in \left(t^\star,\tilde{t}\right]$. By the semi-group property, we can write
\[
P(t) = P\left(\frac{t}{r} + \cdots+ \frac{t}{r}\right) = P\left(\frac{t}{r}\right)^r, \quad \forall r \in \mathbb{N}.
\]
Thus, for any $t\geq 0$, not necessarily small, and by selecting $r$ to be sufficiently large, we have:
\[
P\left(\frac{t}{r}\right)^r = \left(I + \frac{t}{r}A + \mathcal{O}\left(\frac{t^2}{r^2}\right)\right)^r\approx I + tA + \mathcal{O}\left(\frac{t^2}{r^2}\right).
\]
By following the same analysis as above with this approximation, we conclude that we can always find $t^* \geq s$ such that $h(t,x(s)) > 0$ for $t>t^*$. Since $s$ was arbitrary, we conclude that the optimal strategy must satisfy $N_{u^\star}(t) = \ell$ for all $t$, given that each of the $\ell$ links connects two nodes having different values. If no such link exists at a given time $s$, the adversary does not need to break additional links, although breaking more links does not affect optimality because $h(t,x(s))=0$ in such case. There are two cases under which the adversary cannot find a link to make $h(t,x(s))>0$: (i) The graph at time $s$ is one connected component. In this case, the nodes have already reached consensus and $N_{u^\star} = 0 < \ell$. This is a \emph{losing strategy} for the adversary as it failed in preventing nodes from reaching agreement; (ii) The graph at time $s$ has multiple connected components, and the number of links connecting the components is less than $\ell$. The adversary here possesses a \emph{winning strategy} with $N_{u^\star} < \ell$, as it can disconnect $\mathcal{G}$ into multiple components and prevent consensus.

Thus far, we have shown that the adversary can improve his utility if he switches at some time $t^\star \in [s,s+\Delta s]$ from strategy $A$ to strategy $B$ (where strategy B corresponds to the proposed optimal control). Now, we want to show that switching to strategy $B$ guarantees an improved utility for the adversary regardless of how the original trajectory $A$ changes beyond time $s+\Delta s$. To show this, we will assume that from time $s+\Delta s$ onward, strategy $B$ will mimic the original strategy. Assume that strategy $A$ switches to another strategy $C$ (hence, strategy $B$ will also switch to strategy $C$). Let us denote the system matrix corresponding to strategy $C$ by $R(t) := e^{Ct}$. Let us also restrict our attention to a small interval $[s+\Delta s, s + 2\Delta s]$ over which we can assume that the system is time-invariant. 

We want to prove the following inequality:
\begin{eqnarray*}
\int_{s+\Delta s}^{s + 2\Delta s} k(t)\cdot  \left[ \underbrace{|(R(t-(s+\Delta s))-M )Q(s+\Delta s - t^\star)P(t^\star -s) x(s)|^2}_{:=I_1}  - \right.\\
\left. \underbrace{| (R(t-(s+\Delta s))-M )P(s + \Delta s -s) x(s)|^2}_{:=I_2}   \right] dt > 0.
\end{eqnarray*}
As before, it suffices to prove that the integrant (in particular, $I_1-I_2$) is positive. Let us now expand both $I_1$ and $I_2$.

\begin{eqnarray*}
I_1 & = & x(s)^TP(\ts -s)Q(s+\ds-\ts)(R(t-(s+\ds))-M)(R(t-(s+\ds))-M)\\
&&Q(s+\ds-\ts)P(\ts -s)x(s)\\
& = & x(s)^TP(\ts -s)Q(s+\ds-\ts)(R(2(t-(s+\ds)))-M)Q(s+\ds-\ts)P(\ts -s)x(s)\\
& = & x(s)^T(P(\ts -s)Q(s+\ds-\ts)R(2(t-(s+\ds)))Q(s+\ds-\ts)P(\ts -s)-M)x(s).
\end{eqnarray*}
Similarly,
\[
I_2 = x(s)^T(P(\ds)R(2(t-(s+\ds)))P(\ds)-M)x(s) 
\]
Further, we have
\begin{eqnarray*}
I_1 - I_2 = x(s)^T(\underbrace{P(\ts -s)Q(s+\ds-\ts)R(2(t-(s+\ds)))Q(s+\ds-\ts)P(\ts -s)}_{F_1}\\
 - \underbrace{P(\ds)R(2(t-(s+\ds)))P(\ds)}_{F_2})x(s).
\end{eqnarray*}
Before we perform a first-order Taylor expansion to the above terms, let us define the following quantities:
$$
\tau_1 = t^\star -s, \quad \tau_2 = (s+\Delta s) - t^\star, \quad \tau_3 = t - (s+\Delta s),
$$
where $t^\star \in [s,s+\Delta s]$ and $t \in [s+\Delta s, s+2\Delta s]$.
\begin{figure}[!t]
\centering
\includegraphics[width=9.5cm]{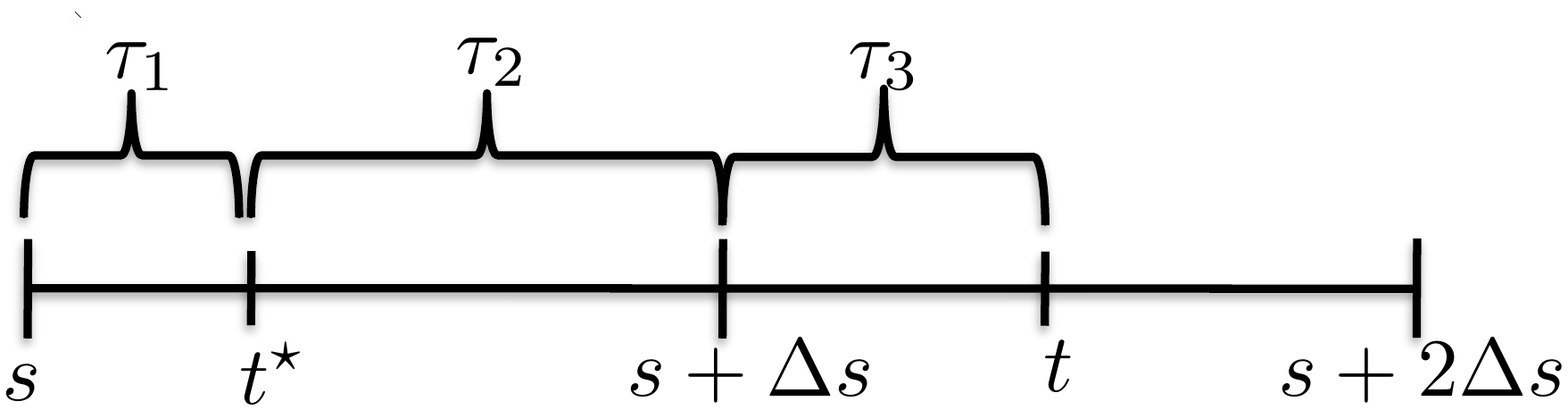}
\label{fig::TwoIntProof}
\end{figure}

Recall that we write $f(x) = \bigO{g(x)}$ as $x\to a$ if $\exists$ constants $M,\delta$ such that
\[
|f(x)| \leq M|g(x)|, \quad \text{for all $x$ satisfying $|x-a| < \delta$.}
\] 
Also, recall that following properties:
\begin{itemize}
\item $f(x)\bigO{g(x)} = \bigO{f(x)g(x)}$
\item $c\cdot \bigO{f(x)} = \bigO{f(x)}$, $c$ is a constant  
\end{itemize}
Using the above definition, we can prove the following claims.
\begin{claim}
As $\ds \to 0$, we have:
\begin{itemize}
\item[1-] If $f(\tau_i,\Delta s) = \bigO{\tau_i^2}$, then $f(\tau_i,\Delta s) = \bigO{\Delta s^2},$ $i \in\{1,2,3\}$ 
\item[2-]  If $f(\tau_i,\tau_j,\Delta s) = \tau_i\bigO{\tau_j^2}$, then $f(\tau_i,\tau_i,\Delta s) = \bigO{\ds^3},$ $i,j \in\{1,2,3\}$ 
\end{itemize}
\end{claim}
\begin{proof}
We proceed by using the above definition and properties.
\begin{itemize}
\item[1-] We have that $f(\tau_i,\Delta s) \leq M\tau_i^2 \leq M \ds^2$. Hence, $f(\tau_i,\Delta s) = \bigO{\ds^s}$.
\item[2-]  $f(\tau_i,\tau_j,\Delta s) = \tau_i\bigO{\tau_j^2} = \bigO{\tau_i \tau_j^2}$. Hence, $f(\tau_i,\tau_j,\Delta s) \leq M\tau_i \tau_j^2\leq M\ds^3$ and $f(\tau_i,\tau_j,\Delta s) = \bigO{\ds^3}$. 
\end{itemize}
\end{proof}
We can now expand $F_1$ and $F_2$ as follows. Note that, $\ds \to 0$, $f+\bigO{\ds^2}+\bigO{\ds^3}=f+\bigO{\ds^2}$.
\begin{eqnarray*}
F_1 & = & \left(I+\tau_1A+\bigO{\tau_1^2}\right)\left(I+\tau_2B+\bigO{\tau_2^2}\right)\left(I+2\tau_3C+\bigO{\tau_3^2}\right)\left(I+\tau_2B+\bigO{\tau_2^2}\right)\\
&&\left(I+\tau_1A+\bigO{\tau_1^2}\right) \\
& = & \left(I+\tau_1A+\tau_2B+\bigO{\ds^2}\right)\left(I+2\tau_3C+\bigO{\ds^2}\right)\left(I+\tau_1A+\tau_2B+\bigO{\ds^2}\right)\\
& = & \left(I+\tau_1A+\tau_2B+2\tau_3C+\bigO{\ds^2}\right)\left(I+\tau_1A+\tau_2B+\bigO{\ds^2}\right)\\
& = & I + 2\tau_1A + 2\tau_2B + 2\tau_3C + \bigO{\ds^2}.\\ \\
F_2 & = & \left(I+\ds A + \bigO{\ds^2}\right)\left(I +2\tau_3C+\bigO{\tau_3^2}\right)\left(I+\ds A + \bigO{\ds^2}\right) \\
& = &  \left(I+\ds A + 2\tau_3C+\bigO{\ds^2}\right)\left(I+\ds A + \bigO{\ds^2}\right)\\
& = & I+2\ds A + 2\tau_3C + \bigO{\ds^2}.
\end{eqnarray*}

Hence, we have
\begin{eqnarray*}
F_1 - F_2 & = & 2\left(\tau_1-\ds\right)A+2\tau_2B+\bigO{\ds^2}, \\
& = & 2\tau_2\left(B -A\right) +\bigO{\ds^2}\\
& = & 2\left(\left(s+\ds\right)-\ts\right)\left(B-A\right) +\bigO{\ds^2}
\end{eqnarray*}
and therefore, using (8), we obtain
\[
I_1 - I_2 = 2\left(s+\ds -\ts\right)A_{ij}\left(x_j(s)-x_i(s)\right)^2 > 0,
\]
as required.

It remains to show that the links the adversary breaks have the highest $w_{ij}(t)$ values. Let us again restrict our attention to the interval $[s,s+\Delta s]$ where the adversary applies strategy $u^A$. Assume (to the contrary) that the links the adversary breaks over this interval are not the ones with the highest $w_{ij}(t)$ values. In particular, assume that the adversary chooses to break link $(k,l)$, while there is a link $(i,j)$ such that $w_{ij}(t) > w_{kl}(t)$, $t \in [s,s+\Delta s]$. Assume that the adversary switches at time $t^\star \in [s,s+\Delta s]$ to strategy $u^B$ by \emph{breaking} link $(i,j)$ and \emph{unbreaking} link $(k,l)$. Then, (\ref{eqn::condInit}) becomes $h(t,x(s))=2(t-t^*)\left(w_{ij}(s)-w_{lk}(s)  \right)$. Hence, by following the same arguments as above, we conclude that breaking $(k,l)$ is not optimal. The proof is thus complete. 
\end{proof}
\begin{remark}
Potential theory aids in providing an interpretation of the result of Thm \ref{thm::main}. Consider an electrical network with $x_i$ being the voltage at node $i$ with respect to a fixed ground reference. Then, $(x_j-x_i)$ represents the potential difference (or voltage) $V$ across link $(i,j)$. The edge weight $a_{ij}$ represents the conductance of the link. It follows that the weight $w_{ij}$ is in fact the power $P$ dissipated across link $(i,j)$. Hence, the adversary will break the links with the highest power dissipation.
\end{remark}

\begin{theorem}
The optimal strategy derived in Thm \ref{thm::main} satisfies the canonical equations of the MP.
\end{theorem}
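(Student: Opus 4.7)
The plan is to verify the three canonical conditions of Pontryagin's MP---the state ODE with its initial condition (\ref{eqn::ODEx1}), the adjoint ODE with its terminal condition (\ref{eqn::ODEp1}), and the pointwise maximum condition (\ref{eqn::OptCtrlMP})---when $u^\star$ is the strategy delivered by Thm \ref{thm::main}. The first two conditions hold by construction: once $u^\star$ is inserted into $A^\star_{ij}(t) = a_{ij}(1-u^\star_{ij}(t))$, the trajectory $x^\star(t)$ is defined as the unique solution of $\dot{x} = A^\star(t)x$, $x(0)=x_0$, and $p^\star(t)$ is defined as the unique solution of the terminal-value problem (\ref{eqn::ODEp1}), whose closed form is given by (\ref{eqn::pDyn}). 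Only the pointwise maximum condition requires real work.

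For the maximum condition, I would use the standard adjoint identity $\Phi_p(t,\tau) = \Phi_x(\tau,t)^T$ together with the fact that $A(t)\mathbf{1}=0$ implies $\Phi_x(\tau,s)\mathbf{1}=\mathbf{1}$ (hence $\Phi_x(\tau,0)x_0-\bar{x} = \Phi_x(\tau,t)(x^\star(t)-\bar{x})$) to rewrite (\ref{eqn::pDyn}) in the compact form
\[
p^\star(t) \;=\; 2 M_t\,(x^\star(t)-\bar{x}), \qquad M_t \;:=\; \int_t^T k(\tau)\,\Phi_x(\tau,t)^T\Phi_x(\tau,t)\,d\tau,
\]
which is a positive semidefinite matrix for every $t\in[0,T]$. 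Substituting into $f_{ij} = a_{ij}(p_j-p_i)(x_i-x_j)$ produces the explicit representation
\[
f_{ij}(t) \;=\; -2a_{ij}\,\bigl[(e_j-e_i)^T M_t\,(x^\star(t)-\bar{x})\bigr]\bigl[(e_j-e_i)^T (x^\star(t)-\bar{x})\bigr],
\]
so that the MP (\ref{eqn::OptCtrlMP}) selects the $\ell$ edges with the most negative $f_{ij}(t)$, while Thm \ref{thm::main} selects the $\ell$ edges with the largest $w_{ij}(t) = a_{ij}\bigl[(e_j-e_i)^T(x^\star(t)-\bar{x})\bigr]^2$.

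The main obstacle is then showing that these two sets of $\ell$ edges coincide at every $t$, which does not follow from the PSD structure of $M_t$ alone, since in general the factor $(e_j-e_i)^T M_t(x^\star-\bar{x})$ need not be a common positive multiple of $(e_j-e_i)^T(x^\star-\bar{x})$ across different edges. The cleanest way to close the gap is indirect. Thm \ref{thm::main} has already established that $u^\star$ is a global maximizer of $J$ over $\mathcal{U}$, and Pontryagin's MP provides \emph{necessary} conditions for any such maximizer; hence $u^\star$ must satisfy (\ref{eqn::OptCtrlMP}) along its own state--costate trajectory. Indeed, were $u^\star$ to disagree with the MP $\arg\max$ on a positive-measure subset of $[0,T]$, the local swap argument already used in the proof of Thm \ref{thm::main} would produce an admissible strategy with strictly larger $J$, contradicting global optimality. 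Combined with the trivial verification of (\ref{eqn::ODEx1}) and (\ref{eqn::ODEp1}) above, this confirms all three canonical MP conditions and completes the proof.
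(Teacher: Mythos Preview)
Your proposal is correct but follows a genuinely different route from the paper.

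The paper attacks the maximum condition \emph{directly}: it partitions $[0,T]$ into short subintervals, carries out first-order Taylor expansions of both $x(t)$ and $p(t)$ on each subinterval (starting from $[T-\Delta s,T]$ and working backwards), and computes explicitly that
\[
f_{ij}-f_{kl}\;=\;2\xi(T,t)\,x^T\Gamma_1 x+\mathcal{O}(\Delta s^2),
\]
with $\Gamma_1$ built from $(I_i-I_j)(I_i-I_j)^T$ and $(I_k-I_l)(I_k-I_l)^T$. From this it reads off the implication $\tilde w_{ij}\le\tilde w_{kl}\Rightarrow f_{ij}\le f_{kl}$, i.e., the $w$-ordering of Thm~\ref{thm::main} and the $f$-ordering of (\ref{eqn::OptCtrlMP}) coincide edge by edge.

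You instead argue \emph{indirectly}: Thm~\ref{thm::main} already certifies $u^\star$ as a global maximizer of $J$, and since Pontryagin's MP is a necessary condition for any maximizer, the pointwise maximum condition must hold for $u^\star$ along its own $(x^\star,p^\star)$. Together with the trivial verification of (\ref{eqn::ODEx1}) and (\ref{eqn::ODEp1}) by construction, this closes the argument in a few lines. Your adjoint representation $p^\star(t)=2M_t(x^\star(t)-\bar x)$ with $M_t$ PSD is correct and tidy, even though---as you note---it is not by itself strong enough to force the two orderings to agree.

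What each approach buys: your route is far shorter and leverages the abstract machinery cleanly; the paper's route is constructive and actually exhibits the leading-order relationship between the $f_{ij}$ and $w_{ij}$ differences, so the equivalence of the two rankings becomes explicit rather than a consequence of existence. One small imprecision in your write-up: the sentence invoking ``the local swap argument already used in the proof of Thm~\ref{thm::main}'' to rule out a positive-measure failure of the MP $\arg\max$ conflates two different variations. The swap in Thm~\ref{thm::main} compares against the $w_{ij}$-based choice, not against the Hamiltonian maximizer; the correct justification for that step is simply the standard needle-variation argument underlying the MP itself. This does not affect the validity of your main line, which rests on MP necessity.
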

\begin{proof}
Recall that the MP requires us to find the \emph{lowest} $f_{ij}$'s whereas Theorem \ref{thm::main} dictates that we find the \emph{largest} $w_{ij}$'s. Let us define the terms $\tilde{w}_{ij} = -w_{ij}$. Thus, it is sufficient to show that $\tilde{w}_{ij} \leq \tilde{w}_{kl}$ implies that $f_{ij} \leq f_{kl}$. To do so, we need to perform a first-order Taylor expansion for $x(t)$ and $p(t)$ over the interval $[s=T-\Delta s, T]$, with $\Delta t>0$ small. Over this interval, the system is time-invariant. Let $A$ denote the system matrix over this interval. Then, from (\ref{eqn::xDyn}) and (\ref{eqn::xDyn}), we can write
\begin{eqnarray}
x(t) & = & e^{A(t-s)}x(s) \\
p(t) & = & 2 \int_t^T e^{-A(t-\tau)}(x(\tau)-\bar{x}) d\tau. \label{eqn::pDynNew}
\end{eqnarray}  
Let $P(t-s) := e^{A(t-s)} = I + (t-s)A + \bigO{\Delta s^2}$. We can then simplify re-write the above expressions as
\begin{eqnarray*}
x(t) & = & P(t-s)x(s)\\
& = & [I + (t-s)A]x(s) + \bigO{\Delta s^2},
\end{eqnarray*}
and
\begin{eqnarray*}
p(t) & = & 2\int_t^T P(\tau -t)[P(\tau-s)-M]x(s) d\tau\\
& = & 2\int_t^T [P(2\tau -t- s)-M]x(s) d\tau \\
& = & 2\int_t^T [I+(2\tau -t- s)A-M]x(s) d\tau + \bigO{\Delta s^2} \\
p(t) & = & [2(T-t)I + 2(T-t)(T-s)A-2(T-t)M]x(s) + \bigO{\Delta s^2}.
\end{eqnarray*}
Define $\xi(t,s) := t-s$ and write
\begin{eqnarray}
x(t) & = & \left[I + \xi(t,s)A\right]x(s) + \bigO{\Delta s^2}\\
p(t) & = & [2\xi(T,t)I + 2\xi(T,t)\xi(T,s)A-2\xi(T,t)M]x(s) + \bigO{\Delta s^2}.
\end{eqnarray}
Further, define the matrices 
\[
G := I + \xi(t,s)A, \quad H:=2\xi(T,t)I + 2\xi(T,t)\xi(T,s)A-2\xi(T,t)M.
\]
Ignoring the higher order terms $\bigO{\Delta s^2}$ for simplicity, we can write
\begin{eqnarray*}
\tilde{w}_{ij} & = & a_{ij}(x_i -x_j)(x_j-x_i) \\
& = & a_{ij}x(s)^T(g_i-g_j)(g_j-g_j)^Tx(s) \\
f_{ij} & = & a_{ij}x(s)^T(h_i-h_j)(g_j-g_j)^Tx(s),
\end{eqnarray*}
where $g_i^T$, $h_i^T$ are the $i$-th row of $G$ and $H$, respectively. Using the definitions of $G$ and $H$, we obtain
\begin{eqnarray*}
(g_i-g_j)(g_j-g_j)^T & = & -(I_i-I_j)(I_i-I_j)^T - \xi(t,s)[(I_i-I_j)(A_i-A_j)^T+(A_i-A_j)(I_i-I_j)^T]\\
&& - \xi(t,s)^2(A_i-A_j)(A_i-A_j)^T.
\end{eqnarray*}
The last term is quadratic so we can lump it into $\bigO{\Delta s^2}$. We then have
\begin{eqnarray*}
&& a_{ij}(g_i-g_j)(g_j-g_j)^T - a_{kl}(g_k-g_l)(g_l-g_k)^T  = (a_{kl}(I_k-I_l)(I_k-I_l)^T-a_{ij}(I_i-I_j)(I_i-I_j)^T) \\
&& + (a_{kl}(I_k-I_l)(A_k-A_l)^T-a_{ij}(I_i-I_j)(A_i-A_j)^T)\xi(t,s)\\
&& + (a_{kl}(A_k-A_l)(I_k-I_l)^T-a_{ij}(A_i-A_j)(I_i-I_j)^T)\xi(t,s) + \bigO{\Delta s^2}. 
\end{eqnarray*}
Similarly, we have
\begin{eqnarray*}
&&a_{ij}(h_i-h_j)(g_j-g_j)^T - a_{kl}(h_k-h_l)(g_l-g_k)^T  = (a_{kl}(I_k-I_l)(I_k-I_l)^T\\
&& -a_{ij}(I_i-I_j)(I_i-I_j)^T)2\xi(T,t)  + (a_{kl}(I_k-I_l)(A_k-A_l)^T-a_{ij}(I_i-I_j)(A_i-A_j)^T)2\xi(T,t)\xi(t,s)\\
&& + (a_{kl}(A_k-A_l)(I_k-I_l)^T-a_{ij}(A_i-A_j)(I_i-I_j)^T)2\xi(T,t)\xi(T,s) + \bigO{\Delta s^2}. 
\end{eqnarray*}
Let $\Gamma_1 = a_{kl}(I_k-I_l)(I_k-I_l)^T-a_{ij}(I_i-I_j)(I_i-I_j)^T$, $\Gamma_2 = a_{kl}(I_k-I_l)(A_k-A_l)^T-a_{ij}(I_i-I_j)(A_i-A_j)^T$, and $\Gamma_3 = a_{kl}(A_k-A_l)(I_k-I_l)^T-a_{ij}(A_i-A_j)(I_i-I_j)^T$. We now have
\begin{eqnarray*}
\tilde{w}_{ij}-\tilde{w}_{kl} & = & x(s)^T(\Gamma_1 + \xi(t,s) \Gamma_2 + \xi(t,s) \Gamma_2)x(s) + \bigO{\Delta s^2} \\
f_{ij}-f_{kl} & = & x(s)^T(2\xi(T,t)\Gamma_1 + 2\xi(T,t)\xi(t,s) \Gamma_2 + 2\xi(T,t)\xi(T,s) \Gamma_3)x(s) + \bigO{\Delta s^2}.
\end{eqnarray*}
But $\xi(T,t)\xi(t,s)$ and $\xi(T,t)\xi(T,s) $ are of order $\Delta s^2$ so we can also lump them into $\bigO{\Delta s^2}$ to obtain
\begin{equation}
f_{ij}-f_{kl} = x(s)^T(2\xi(T,t)\Gamma_1)x(s) + \bigO{\Delta s^2}.
\end{equation}
If $\tilde{w}_{ij}-\tilde{w}_{kl}\leq 0$, and since $\xi(T,t) \leq 0$, we can write
\begin{equation*}
2\xi(T,t)(\tilde{w}_{ij}-\tilde{w}_{kl}) = x(s)^T(2\xi(T,t)\Gamma_1 + 2\xi(T,t)\xi(t,s) \Gamma_2 + 2\xi(T,t)\xi(t,s) \Gamma_2)x(s) + \bigO{\Delta s^2} \leq 0,
\end{equation*}
or
\[
x(s)^T(2\xi(T,t)\Gamma_1)x(s) + \bigO{\Delta s^2} \leq 0,
\]
but the left hand side is $f_{ij}-f_{kl}$; hence, $\tilde{w}_{ij} \leq \tilde{w}_{kl} \implies f_{ij} \leq f_{kl}$ as required.

So far, we have verified the claim over the interval $[s,T]$ only. We need to verify that the claim holds over the interval $[r=T-2\Delta s,s]$. If the claim holds over this interval, then it can be generalized for the entire horizon of the problem $[0,T]$. The only complication that arises when studying this interval is that the terminal condition, i.e. $p(s)$, is not forced to be zero as in $[s,T]$. Let the system matrix over $[r,s]$ be $B$. Then, the state and costate are give by
\begin{eqnarray*}
x(t) & = &e^{B(t-r)}x(r) \\
p(t) & = & e^{-B(t-r)}p(r) -2\int_r^t e^{-B(t-\tau)}(x(\tau) -\bar{x}) d\tau. 
\end{eqnarray*}
Solving for $p(r)$ interns of $p(s)$ and substituting back, we can write $p(t)$ interns of $p(s)$ as follows:
\[
p(t) = e^{-B(t-s)}p(s) +2\int_t^s e^{-B(t-\tau)}(x(\tau) -\bar{x}) d\tau. 
\]
The integral term on the above expression is similar to that in (\ref{eqn::pDynNew}), and the same analysis above applies to it. We can obtain an expression for $p(s)$ from (\ref{eqn::pDynNew}) to arrive at the following solution over $[r,s]$:   
\begin{equation*}
p(t) = 2 \underbrace{\int_t^s e^{-B(t-\tau)}(x_B(\tau)-\bar{x}) d\tau}_{I_1} + 2 \underbrace{\int_s^T e^{-B(t-s)-A(s-\tau)}(x_A(\tau)-\bar{x}) d\tau}_{I_2}
\end{equation*}
Let $Q(t) = e^{Bt} = I+tB+\bigO{\Delta s^2}$. Then
\begin{eqnarray*}
I_1 & = & \left((s-t)I + (s-t)(s-r)B-(s-t)M\right)x(r) + \bigO{\Delta s^2} \\
I_2 & = & \int_s^T Q(s-t)P(\tau-s)[P(\tau-r)-M]x_A(s) d\tau, \\
& = &  \int_s^T [Q(s-t)P(2\tau-s-r)-M]x(s) d\tau\\
& = & \int_s^T [(I+(s-t)B)(I+(2\tau-s-r)A)-M]x_A(s) d\tau + \bigO{\Delta s^2}\\
& = & \int_s^T [I+(2\tau-s-r)A +(s-t)B-M]x_A(s) d\tau + \bigO{\Delta s^2} \\
& = & \left((T-s)I+\left(T^2-s^2-(s+r)(T-s)  \right)A +(s-t)(T-s)B-(T-s)M\right)x_A(s) + \bigO{\Delta s^2}.
\end{eqnarray*}
where we have used the fact $T-s = s- r = \Delta s$. Since the state is continuous, we have that $x_A(s) = x_B(s)$ or $x_A(s) = e^{B(s-r)}x(r)$. We can then write
\begin{eqnarray*}
I_2 & = & \left((T-s)I+\left(T^2-s^2-(s+r)(T-s)  \right)A +(s-t)(T-s)B-(T-s)M\right) \\
&& \cdot (I+(s-r)B)x(r) + \bigO{\Delta s^2} \\
& = & \left((T-s)I+\left(T^2-s^2-(s+r)(T-s)  \right)A +(T-s)(2s-t-r)B-(T-s)M\right)\\
&&\cdot x(r) + \bigO{\Delta s^2} 
\end{eqnarray*}
Summing both integrals, we obtain
\begin{eqnarray*}
p(t) & = & \left(2(T-t)I+ 2(T-s)(T-r)A + 2(T-s)(3s-2t-r)B - 2(T-t)M    \right)x(r)+ \bigO{\Delta s^2} \\ 
& = & \left(2\xi(T,t)I + 2\xi(T,s)\xi(T,r)A + 2\xi(T,s)(2\xi(s,t)+\xi(s,r)) B -2\xi(T,t)M \right)x(r) + \bigO{\Delta s^2}.
\end{eqnarray*}
Following similar steps to the above, we can derive the following expressions over the interval $[r,s]$: 
\begin{eqnarray*}
\tilde{w}_{ij}-\tilde{w}_{kl} & = & x(r)^T(\Gamma_1 + \xi(t,r) \Gamma_2 + \xi(t,s) \Gamma_2)x(r) + \bigO{\Delta s^2} \\
f_{ij}-f_{kl} & = & x(s)^T(2\xi(T,t)\Gamma_1 + 2\xi(T,t)\xi(t,r) \Gamma_2 + 2\xi(T,s)(2\xi(s,t)+\xi(s,r)) \Gamma_3 \\
&&+ 2\xi(T,s)(T,r)\Gamma_4)x(r)+ \bigO{\Delta s^2},
\end{eqnarray*}
where $\Gamma_1 = a_{kl}(I_k-I_l)(I_k-I_l)^T-a_{ij}(I_i-I_j)(I_i-I_j)^T$, $\Gamma_2 = a_{kl}(I_k-I_l)(B_k-B_l)^T-a_{ij}(I_i-I_j)(B_i-B_j)^T$, $\Gamma_3 = a_{kl}(B_k-B_l)(I_k-I_l)^T-a_{ij}(B_i-B_j)(I_i-I_j)^T$, and $\Gamma_4 = a_{kl}(A_k-A_l)(I_k-I_l)^T-a_{ij}(A_i-A_j)(I_i-I_j)^T$. Note again that $f_{ij}-f_{kl} $ can be simplified to
\begin{equation}
f_{ij}-f_{kl} = x(r)^T(2\xi(T,t)\Gamma_1)x(r) + \bigO{\Delta s^2}. \label{eqn::genDiffForm}
\end{equation}
Thus, by the same argument used over $[s,T]$, we conclude that $\tilde{w}_{ij}-\tilde{w}_{kl}  \leq 0$ implies that $f_{ij}-f_{kl} \leq 0$. From the structure of $p(t)$, and the above analysis, we conclude that $f_{ij}-f_{kl}$ will always have the same form as that in (\ref{eqn::genDiffForm}). Hence, we conclude that the claim holds over the entire horizon $[0,T]$.
\end{proof}
We now provide a geometric property of $u^\star(t)$.
\begin{lemma} \label{prob::ScaleInvar}
\emph{(Scale Invariance)} If $u^\star(t)$ is the optimal solution to (\ref{prb::JammerPrb}) with $x(0)=x_0$, then it is also optimal when starting from $x(0) = c\cdot x_0$, $c \in \mathbb{R}$.
\end{lemma}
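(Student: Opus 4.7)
The plan is to exploit the linearity of the dynamics together with the characterization of the optimal strategy given in Theorem~\ref{thm::main}. Observe that for any fixed admissible control $u(t)$, the state equation $\dot{x}(t) = A(t)x(t)$ is linear, so if $x(t)$ denotes the trajectory starting from $x(0)=x_0$, then the trajectory starting from $c\cdot x_0$ under the \emph{same} control is exactly $c\cdot x(t)$. Consequently the corresponding average is $c\cdot x_{avg}$ and the consensus target becomes $c\cdot \bar{x}$.

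With this observation, the pairwise weights $w_{ij}(t)=a_{ij}(x_j(t)-x_i(t))^2$ transform into $a_{ij}(cx_j(t)-cx_i(t))^2=c^2 w_{ij}(t)$ when the initial condition is scaled by $c$. Since $c^2\ge 0$ is a common positive scalar (for $c\ne 0$), scaling by $c^2$ preserves the ordering of the $w_{ij}(t)$'s: the set of $\ell$ indices with the largest $w_{ij}(t)$ values coincides with the set of $\ell$ indices with the largest $c^2 w_{ij}(t)$ values. By Theorem~\ref{thm::main}, the optimal strategy is determined solely by this ordering, and therefore $u^\star(t)$ remains optimal when the initial condition is $c\cdot x_0$.

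It only remains to dispose of the degenerate case $c=0$. Here $x(t)\equiv 0$ and $\bar{x}=0$, so $J(u)=0$ for every admissible $u$, hence $u^\star(t)$ is (trivially) still optimal. An equivalent way to phrase the whole argument is to note that the objective transforms as $J_{c\cdot x_0}(u)=c^2 J_{x_0}(u)$ for any fixed control $u$, so the maximizer of the right-hand side over $\mathcal{U}$ is also a maximizer of the left-hand side, which directly yields the claim.

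There is no real obstacle in this proof; the only subtle point is confirming that the structural characterization in Theorem~\ref{thm::main} depends on the initial condition only through the ordering of the weights $w_{ij}(t)$, which is invariant under multiplication by a nonnegative scalar. The winning/losing cases with $N_{u^\star}(t)<\ell$ described in Theorem~\ref{thm::main} are also clearly preserved, since the graph-theoretic conditions (existence of a small cut, or the nodes having already agreed) are unchanged by scaling the initial values.
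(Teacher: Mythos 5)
Your proof is correct, but it takes a different route from the paper's. The paper works entirely through the Pontryagin machinery: it uses the closed-form expressions (\ref{eqn::xDyn}) and (\ref{eqn::pDyn}) to show that both the state \emph{and the co-state} scale linearly, $\tilde{x}(t)=c\,x(t)$ and $\tilde{p}(t)=c\,p(t)$, so that the switching functions satisfy $\tilde{f}_{ij}=c^2 f_{ij}$ and the control prescribed by (\ref{eqn::OptCtrlMP}) is unchanged. You instead argue (i) via the $w_{ij}$-characterization of Theorem~\ref{thm::main}, whose ordering is preserved under scaling by $c^2$, and (ii) more directly via the homogeneity identity $J_{c\cdot x_0}(u)=c^2 J_{x_0}(u)$ for every \emph{fixed} admissible $u$ (valid because, for fixed $u$, the matrix $A(t)$ does not depend on the state, so $x(t)=\Phi_x(t,0)x_0$ and $\bar{x}=Mx_0$ are both linear in $x_0$). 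Your argument (ii) is arguably the stronger one: it shows the two problems have identical objective functions up to a nonnegative constant, hence identical maximizer sets, without invoking either the co-state or the (necessary, not a priori sufficient) first-order conditions that the paper relies on; it also handles $c=0$ cleanly, a case the paper's sign argument silently degenerates on. What the paper's version buys in exchange is the explicit fact that the co-state itself scales by $c$, which is what makes the subsequent observation that the sets $\mathcal{S}_i$ of initial conditions with a common stationary optimal control are linear cones immediate in terms of the MP characterization.
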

\begin{proof}
Let $\tilde{x}(t)$ and $\tilde{p}(t)$ be state and co-state vectors corresponding to the initial state $\tilde{x}_0 = c \cdot x_0$. Then, by (\ref{eqn::xDyn}), (\ref{eqn::pDyn}), and uniqueness of the transition matrix, we have:
\begin{eqnarray*}
\tilde{x}(t) & = & \Phi_x(t,0)\tilde{x}_0 =  c \cdot x(t), \\
\tilde{p}(t) & = &  c \cdot 2\int_t^T \Phi_p(t,\tau)k(\tau)\left[\Phi_x(\tau,0)x_0-\bar{x}d\tau\right]  = c \cdot p(t).
\end{eqnarray*}
Hence, $\tilde{f}_{ij} = a_{ij}(\tilde{p}_j(t)-\tilde{p}_i(t))(\tilde{x}_i(t)-\tilde{x}_j(t)) = c^2 \cdot a_{ij}(p_j(t)-p_i(t))(x_i(t)-x_j(t))$. Hence, sgn$\left(f_{ij}\right) =$ sgn$\left(\tilde{f}_{ij}\right)$, $\forall i,j$.
\end{proof}
Consider the following sets for $i \in \{1,..., \sum_{j=0}^{\ell} {n \choose j} \}$: $\mathcal{S}_i= \left\{ x \in \mathbb{R}^n: u_i = \arg \max_{u\in \mathcal{U}} J(u), x(0) = x \right\}$. The set $\mathcal{S}_i$ corresponds to the set of initial conditions starting from which the solution to (\ref{prb::JammerPrb}) is stationary. In view of Lemma \ref{prob::ScaleInvar}, we conclude that the sets $S_i$ are linear cones.
\section{Attack II: An Adversary Capable Of Corrupting Measurements} \label{AttackII}
Assume now that the adversary is capable of adding a noise signal to all the nodes in the network in order to slow down convergence. The dynamics in this case are:
\begin{equation} \label{eqn::stateAttack2}
\dot{x}(t) = Ax(t) + u(t), \quad x(0) = x_0.
\end{equation}
We assume that the instantaneous power $u(t)^Tu(t) =: |u(t)|^2$ that the adversary can expend cannot exceed a fixed value $P_{max}$. We also assume that the adversary has sufficient energy $E_{max}$ to allow it to operate at maximum instantaneous power. Accordingly, the strategy space of the adversary is $\mathcal{U} = \left\{ u(t) \in C^1[0,T] :  |u(t)|^2 \leq P_{max} \right\},$ where $C^1[0,T]$ is a Banach space when endowed with the following norm: $\vnorm{x}_{C^1} = \vnorm{x}_{L_\infty} + \vnorm{\dot{x}}_{L_\infty}$. Thus, the adversary's problem is
\begin{eqnarray}
& \max_{u(t) \in \mathcal{U}} & \quad J(u) \label{prb::JammerPrb2} \\
& \text{s.t.} & \dot{x}(t) = Ax(t) + u(t),\quad A_{ii} =  - \sum_{j=1,j\neq i}^n A_{ij}, \nonumber \\
&& A_{ij} = A_{ji},  A_{ij} \geq 0, A_{ij} >0 \Leftrightarrow (i,j) \in \mathcal{E}. \nonumber
\end{eqnarray}
The Hamiltonian in this case is given by
\begin{eqnarray*}
H(x,p,u) & = &  k(t) \left|x(t) - \bar{x}\right|^2+ p(t)^T\left(Ax(t) + u(t)\right) \\
&& + \lambda(t)\left( |u(t)|^2 - P_{max}\right),
\end{eqnarray*} 
where $\lambda(t)$ is a continuously differentiable Lagrange multiplier associated with the power constraint. As before, we let $x(t),p(t) \in C^1[0,T]$. Here, $\lambda(t)$ must satisfy
\begin{equation*}
\lambda(t) \leq 0, \quad \lambda(t)\left(|u(t)|^2 - P_{max}\right) = 0.
\end{equation*} 
The first-order necessary conditions for optimality are:
\begin{eqnarray}
&& \dot{p}(t) = -\frac{\partial}{\partial x} H(x,u,p,t) \nonumber \\
&& \quad \quad = -2k(t)(x(t)-\bar{x}) - Ap(t), \quad p(T) = 0 \nonumber \\
&& \dot{x}(t) = Ax(t)+u(t), \quad x(0) = x_0  \nonumber \\
&& \frac{\partial}{\partial u}H(x,u,p,t) = 2\lambda(t)u(t) +p(t)=0. \label{eqn::ctrlAttack2}
\end{eqnarray}
To find $u^\star(t)$, consider the following cases:

\textbf{Case 1:} $\lambda(t) < 0 \implies  |u(t)|^2 = P_{max}$. Using (\ref{eqn::ctrlAttack2}), we obtain $\lambda(t) |u(t)|^2 = -\frac{1}{2}u(t)^Tp(t)$; hence,
\begin{equation}
\lambda(t) = -\frac{1}{2P_{max}}u(t)^Tp(t), \label{eqn::Lags}
\end{equation}
which we can then use to solve for the optimal control:
\[
u^\star(t) = P_{max}\frac{p(t)}{u(t)^Tp(t)} =\frac{E_{max}}{T}\cdot \frac{p(t)}{u(t)^Tp(t)}.
\]
\begin{remark} The optimal strategy $u^\star(t)$ is the vector of maximum power that it is aligned with $p(t)$. To see this, note that (\ref{eqn::Lags}) implies that $u(t)^Tp(t) > 0$, because $\lambda(t)<0$. Hence, the vectors $u^\star(t)$ and $p(t)$ are aligned. Define the unit vector $\bar{p}(t)= p(t)/\left|p\right|$. Then, we can further write
\begin{equation} \label{eqn:OptimalControl}
u^\star(t)=\frac{E_{max}/T}{\left|u\right|}\cdot \bar{p} = \sqrt{P_{max}}\cdot \bar{p}.
\end{equation}
Hence, the adversary's optimal solution in this case is to operate at the maximum power available. 
 \end{remark}
\textbf{Case 2:} $ |u(t)|^2 < P_{max} \implies \lambda(t) = 0$. Using (\ref{eqn::ctrlAttack2}), we obtain $p(t) = 0$. In this case the control is singular, since it does not appear in $\frac{\partial}{\partial u}H = 0$. But since $p(t)=0$ for all $t$, all its time derivatives must also be zero:
\begin{eqnarray}
&& \frac{d}{dt}\frac{\partial H}{\partial u} =  \dot{p}(t)  = -2k(t)\left(x(t)-\bar{x}\right) - A^Tp(t) = 0, \nonumber \\ && \therefore x(t) - \bar{x} =   0. \label{eqn::violate}
\end{eqnarray}
The conditions obtained by taking the time derivatives are also necessary conditions that must be satisfied at the optimal trajectory. However, (\ref{eqn::violate}) violates the initial condition. In order to resolve this inconsistency,  we set the control at $t=0$ to be an impulse, $u_i(t) = c\cdot \delta(t)$, in order to make $x(0)=\bar{x}$, where $c\in \mathbb{R}$ is chosen to guarantee $ |u(t)|^2 < P_{max}$. Note that we still have not recovered the control, and therefore we need to differentiate again
\begin{eqnarray}
&& \frac{d^2}{dt^2}\frac{\partial H}{\partial u} = \dot{x}(t) = Ax(t) + u(t) = 0, \nonumber \\
&& \therefore u(t) = -Ax(t) = -A\bar{x} = 0. \label{eqn::ctrlCase2Attack2}
\end{eqnarray}
\begin{remark}
Note that $x(t)=\bar{x}$ leads to having $u(t)=0$. This result matches intuition; when the nodes reach consensus, $J(u)=0$ for all $u(t) \in \mathcal{U}$. Hence, no matter what the control is, the utility of the adversary will always be zero. Thus, expending power becomes sub-optimal, and the optimal strategy is to do nothing. 
\end{remark}
Because the adversary attempts to increase the Euclidean distance between $x(t)$ and $\bar{x}$, we can readily see that $u(t)=0$ cannot be optimal, unless $x(t) = \bar{x}$. The following lemma proves this formally.
\begin{lemma} \label{lemma::equalityConst}
The solution of (\ref{prb::JammerPrb2}) satisfies $|u(t)|^2 = P_{max}$.
\end{lemma}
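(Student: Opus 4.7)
The plan is to argue by contradiction, using a second-order perturbation to rule out the singular arc where the power constraint is slack. Suppose $u^\star$ is optimal but $\{t:|u^\star(t)|^2<P_{max}\}$ has positive Lebesgue measure. Since $u^\star\in C^1[0,T]$, this set is open and therefore contains an open interval $I=(t_1,t_2)$. On $I$, exactly the Case~2 reasoning already carried out in the paper through (\ref{eqn::ctrlCase2Attack2}) applies verbatim: complementary slackness gives $\lambda\equiv 0$, the optimality condition (\ref{eqn::ctrlAttack2}) then gives $p\equiv 0$, and successive differentiation along the adjoint and state equations forces $x\equiv\bar x$ and $u^\star\equiv 0$ on $I$. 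Thus the integrand of $J$ vanishes on $I$ and the first-order MP condition degenerates there, so higher-order information is needed.

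Next I would choose a nonzero $C^1$ perturbation $\delta u$ with compact support inside $I$, small enough in sup-norm that $\tilde u:=u^\star+\delta u$ still satisfies $|\tilde u|^2\le P_{max}$ everywhere; this is automatic on $I$ because $u^\star$ vanishes there, and trivial off $I$. Let $\delta x$ be the induced state variation, so $\delta\dot x=A\delta x+\delta u$ with $\delta x(0)=0$. Then
\[
J(\tilde u)-J(u^\star)=\int_0^T k(t)\left[2(x^\star(t)-\bar x)^T\delta x(t)+|\delta x(t)|^2\right]dt.
\]
The key step is the standard adjoint identity: integrating by parts and using $\dot p=-2k(x^\star-\bar x)-A^Tp$ with the boundary data $p(T)=0$ and $\delta x(0)=0$ converts the first-order term into $\int_0^T p(t)^T\delta u(t)\,dt$. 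Since $\delta u$ is supported in $I$ and $p\equiv 0$ on $I$, this contribution vanishes identically.

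What remains is the nonnegative quadratic $\int_0^T k(t)|\delta x(t)|^2\,dt$, which I would show is strictly positive by an explicit choice of $\delta u$. By Duhamel's formula $\delta x(t)=\int_0^t e^{A(t-s)}\delta u(s)\,ds$; taking $\delta u(s)=\phi(s)e$ with $\phi$ a smooth nonnegative $C^1$ bump supported just above $t_1$ and $e\in\mathbb{R}^n$ any fixed unit vector yields $\delta x(t)\approx e\int_{t_1}^t\phi(s)\,ds\neq 0$ on a subinterval of $I$, on which $k>0$. Hence $J(\tilde u)>J(u^\star)$, contradicting optimality, and therefore $|u^\star(t)|^2=P_{max}$ for all $t\in[0,T]$. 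The main obstacle is conceptual rather than technical: the first-order Pontryagin conditions derived earlier are perfectly compatible with the singular arc $p\equiv 0$, $x\equiv\bar x$, $u\equiv 0$, so the lemma cannot be settled at first order. The clean resolution is that once the adjoint identity annihilates the linear term in the second variation, the surviving $\int k|\delta x|^2$ is automatically positive for any nontrivial perturbation, ruling the singular arc out as a maximizer.
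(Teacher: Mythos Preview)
Your argument is correct and takes a genuinely different route from the paper. The paper does not perturb: it simply observes that any control for which the constraint is slack forces (via the Case~2 calculations) $x\equiv\bar x$ and hence $J(u_1)=0$, and then exhibits one explicit admissible control $u_2(t)=\sqrt{P_{max}/n}\,\mathbf{1}$, computes $|x(t)-\bar x|^2$ in closed form using $e^{At}\mathbf{1}=\mathbf{1}$ and the semi-group property, and shows $J(u_2)\ge \tfrac{P_{max}}{3}T^3>0$. So the paper's proof is a direct comparison with a concrete competitor rather than a variational analysis.

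Your approach buys more: by localizing to an interval $I$ where the constraint is slack and using the adjoint identity to kill the first-order term, you rule out \emph{any} extremal that is singular on a subinterval, not just the globally singular one. The paper's proof, read literally, only excludes controls with $|u(t)|^2<P_{max}$ for all $t$; your argument closes that gap. What the paper's method buys is a quantitative lower bound on the optimal value and avoids the integration-by-parts machinery entirely. One minor tightening of your write-up: rather than the approximate statement $\delta x(t)\approx e\int_{t_1}^t\phi$, it is cleaner to note that $\delta x\equiv 0$ together with $\delta\dot x=A\delta x+\delta u$ would force $\delta u\equiv 0$, so any nontrivial $\delta u$ gives $\delta x\not\equiv 0$ and hence $\int_0^T k|\delta x|^2>0$ since $k>0$; alternatively take $e=\mathbf{1}$ so that $e^{A(t-s)}e=\mathbf{1}$ and the formula is exact.
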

\begin{proof}
Assume that $|u_1(t)|^2< P_{max}$; then by (\ref{eqn::violate}) and (\ref{eqn::ctrlCase2Attack2}), $J(u_1) = 0$. Consider another solution which satisfies the power constraint with equality. Namely, let $u_2(t) = \sqrt{\frac{P_{max}}{n}}\mathbf{1}$. Using the solution to (\ref{eqn::stateAttack2}), and by defining the doubly stochastic matrix $P(t) = e^{At}$, for $t \geq 0$,
\begin{equation*}
x(t) = P(t) x_0 + \sqrt{\frac{P_{max}}{n}} \mathbf{1}t.
\end{equation*}
In this case, for $t\geq 0$, we have
\begin{eqnarray}%
&& \left|x(t) - \bar{x}\right|^2 = x_0^T(P(t)-M)^T(P(t)-M)x_0   \nonumber \\
&& + P_{max}t^2 + 2\sqrt{\frac{P_{max}}{n}}x_0^T(P(t)-M)\mathbf{1}t  \nonumber \\
&& = x_0^T(P(t)^2 - 2MP(t) - M^2)x_0 + P_{max}t^2  \label{eqn::step1}\\
&& = x_0^TP(2t)(I - M)x_0 + P_{max}t^2, \label{eqn::step2}
\end{eqnarray}
where (\ref{eqn::step1}) follows because $P(t)$, $t\geq 0$, and $M$ are stochastic matrices, and (\ref{eqn::step2}) follows from (\ref{prop::M}) and the semi-group property. Being a stochastic matrix, $P(2t)$ is positive semidefinite (psd). Also, $I-M$ is a Laplacian matrix; therefore, it is also psd. Further, note that
\begin{equation*}
P(2t)(I-M) = P(2t)-MP(2t) = (I-M)P(2t).
\end{equation*}
Hence, $P(2t)(I-M)$ is also psd, and therefore $x_0^TP(2t)(I - M)x_0 \geq 0$ for $t\geq 0$. This in turn implies
\begin{eqnarray*}
J(u_2) & = & \int_0^T k(t)\left[x_0^TP(2t)(I-M)x_0 + P_{max}t^2\right]dt \\
& \geq & \frac{P_{max}}{3}T^3 > J(u_1) = 0.
\end{eqnarray*}
We conclude that not utilizing the power budget available yields a lower utility for the adversary.\end{proof}
With Lemma \ref{lemma::equalityConst} at hand, it remains to determine the co-state vector in order to completely characterize $u^\star(t)$. To do so, we will invoke Banach's fixed-point theorem. To this end, we will work with the scaled utility $\tilde{J}(u) = \nu J(u)$, $\nu > 0$, without loss of generality. Note that $u^\star(t)$ in (\ref{eqn:OptimalControl}) is also the solution to the maximization problem of $\tilde{J}(u)$. The co-state trajectory is given by
\begin{equation} \label{eqn::costate}
p(t) = 2\nu \int_t^T k(\tau) P(\tau-t)(x(\tau)-\bar{x})d\tau.
\end{equation}
Substituting (\ref{eqn:OptimalControl}) and the solution to (\ref{eqn::stateAttack2}) into (\ref{eqn::costate}) yields
\begin{equation*}
p(t) = g(t) + 2\nu\sqrt{P_{max}}\int_t^T \int_0^\tau k(\tau)P(2\tau - (t+s)) \bar{p}(s) ds d\tau,
\end{equation*}
where $g(t) = 2\nu \int_t^T P(\tau-t)k(\tau) (P(\tau)x_0 - \bar{x}) d\tau$. Note that $2\tau - (t+s) \geq 0$ for $0 \leq s \leq \tau$, $t \leq \tau \leq T$, and hence $P(.)$ is a well-defined doubly stochastic matrix over the region of integration. We define the mapping $\mathcal{T}(p)(t):=p(t)$. By its structure, it is readily seen that $\mathcal{T}(p)(t): C^1[0,T] \to C^1[0,T]$. The following lemma aids in obtaining the co-state vector.
\begin{lemma} \label{lemma::normIneq}
Let $\tilde{\mathcal{T}}(x)(t) := k(t)\int_0^tP(s)x(s)ds$, where $P(t)$ is a doubly stochastic matrix, and fix $x(t) \in C^1[0,T]$. Then
\[
\vnorm{\tilde{\mathcal{T}}(x)}_{L_\infty} \leq \sup_{0\leq t \leq T} tk(t) \cdot \vnorm{x}_{L_\infty}.
\]
\end{lemma}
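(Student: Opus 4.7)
The plan is to establish the inequality pointwise in $t$ and then pass to the supremum, with the only nontrivial ingredient being that doubly stochastic matrices are non-expansive in the Euclidean norm.

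First, I would move the norm inside the vector-valued integral. Since $k(t)$ is positive,
\[
|\tilde{\mathcal{T}}(x)(t)| = \left| k(t)\int_0^t P(s) x(s)\,ds \right| \le k(t)\int_0^t |P(s)x(s)|\,ds.
\]
This is a routine triangle inequality and poses no difficulty.

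Second, I would bound $|P(s)x(s)|$ uniformly. Here I would invoke the fact that any doubly stochastic matrix $P$ satisfies $|Py| \le |y|$ for all $y \in \mathbb{R}^n$, so that $|P(s)x(s)| \le |x(s)| \le \vnorm{x}_{L_\infty}$. The quickest justification is that, by Birkhoff's theorem, $P(s)$ lies in the convex hull of permutation matrices, each of which is an orthogonal (hence Euclidean isometry) matrix; convexity of the operator norm then gives $\vnorm{P(s)}_2 \le 1$. This step is where all the structural content lives, and it is the only place where the assumption that $P(s)$ is doubly stochastic is used.

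Combining the two observations yields, for every $t \in [0,T]$,
\[
|\tilde{\mathcal{T}}(x)(t)| \le k(t)\int_0^t \vnorm{x}_{L_\infty}\,ds = t\, k(t)\, \vnorm{x}_{L_\infty}.
\]
Taking the supremum over $t \in [0,T]$ on the left and bounding $t\,k(t)$ by its supremum on the right gives the claimed inequality. No step requires the $C^1$ regularity of $x$; continuity is enough to make the integrand measurable and bounded, so the proof is really just an application of Birkhoff's theorem plus the triangle inequality. The only obstacle worth a second thought is the non-expansiveness of $P(s)$ in Euclidean norm, and that is essentially immediate.
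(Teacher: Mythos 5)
Your proof is correct for the reading in which the vector norm underlying $\vnorm{\cdot}_{L_\infty}$ is Euclidean, but it takes a genuinely different route from the paper's, and the difference matters for the constant. The paper's proof treats $\vnorm{y}_{L_\infty}$ as the componentwise supremum $\sup_{0\leq t\leq T}\sup_{1\leq i\leq n}|y_i(t)|$ and argues row by row: $\left|\int_0^t\sum_{j}P_{ij}(s)x_j(s)\,ds\right|\leq\int_0^t\sum_j P_{ij}(s)|x_j(s)|\,ds\leq\int_0^t\sup_j|x_j(s)|\,ds\leq t\,\vnorm{x}_{L_\infty}$, using only that each row of $P(s)$ sums to one (a convex combination is dominated by the maximum; the paper labels this step H\"older's inequality). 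So the paper needs only row-stochasticity, not Birkhoff's theorem; double stochasticity is not even fully exploited. Your argument instead relies on $\vnorm{P(s)}_2\leq 1$, which is correct (via Birkhoff plus the orthogonality of permutation matrices) but is a heavier structural input, and it establishes the inequality for the $\ell_2$-based sup norm. Since the paper's own proof reveals that the intended norm is the $\ell_\infty$-based one, translating your bound back costs a factor of $\sqrt{n}$ on the right-hand side (because $|x(s)|\leq\sqrt{n}\,\sup_j|x_j(s)|$), so strictly speaking you obtain the stated inequality only up to that dimension-dependent constant. This is harmless for how the lemma is used (it would merely shrink the admissible range of $\nu$ in the subsequent contraction argument), but the componentwise argument is both more elementary and yields the exact constant. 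Your observation that $C^1$ regularity of $x$ is not needed is correct.
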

\begin{proof}
We have:
\begin{eqnarray*}
&&\vnorm{\tilde{\mathcal{T}}(x)}_{L_\infty}  =  \sup_{0\leq t \leq T} \vnorm{k(t)\int_0^tP(s)x(s)ds}_{L_\infty}\\
&&= \sup_{0\leq t \leq T} k(t) \sup_{1\leq i \leq n} \left| \int_0^t \sum_{j=1}^n P_{ij}(s)x_j(s)ds \right| \\
&& \leq  \sup_{0\leq t \leq T} k(t) \sup_{1\leq i \leq n}  \int_0^t \sum_{j=1}^n P_{ij}(s)\left| x_j(s)\right|ds
\end{eqnarray*}
\begin{eqnarray*}
&&\overset{(a)}{\leq}  \sup_{0\leq t \leq T} k(t) \sup_{1\leq i \leq n}  \int_0^t \left(\sum_{j=1}^n P_{ij}(s)\right)\sup_{1\leq j \leq n}\left| x_j(s)\right|ds\\
&&=  \sup_{0\leq t \leq T} k(t) \int_0^t \sup_{1\leq j \leq n}\left| x_j(s)\right|ds\\
&& \leq  \sup_{0\leq t \leq T} k(t) \int_0^t \sup_{0\leq s \leq T} \sup_{1\leq j \leq n}\left| x_j(s)\right|ds\\
&&= \sup_{0\leq t \leq T} tk(t) \cdot\vnorm{x}_{L_\infty},
\end{eqnarray*}
where $(a)$ follows from H\"{o}lder's inequality.  
\end{proof}
\begin{theorem}
By choosing $\nu < \frac{1}{2\sqrt{P_{max}}(\check{k}+\hat{k})}$, where $\check{k} = \sup_{0\leq t \leq T} tk(t)$ and $\hat{k} = \sup_{0\leq t \leq T}\int_t^T \tau k(\tau)d\tau$, the mapping $\mathcal{T}(p)(t): C^1[0,T] \to C^1[0,T]$ has a unique fixed point $p^\star(t) \in C^1[0,T]$ that can be obtained by any sequence generated by the iteration $p_{k+1}(t) = \mathcal{T}(p_k)(t)$, starting from an arbitrary vector $p_0(t) \in C^1[0,T]$.
\end{theorem}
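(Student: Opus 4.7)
The plan is to verify the hypotheses of Banach's fixed-point theorem for the operator $\mathcal{T}$ on the Banach space $(C^1[0,T],\|\cdot\|_{C^1})$. First I would argue that $\mathcal{T}$ maps $C^1[0,T]$ into itself: for any $p\in C^1[0,T]$, the integrand is continuous in $(t,s,\tau)$, the map $\tau\mapsto P(\tau-t)$ is smooth since $P(\sigma)=e^{A\sigma}$, and $g$ is itself $C^1$. Differentiating under the integral sign (using Leibniz' rule and the $C^1$ regularity of $\bar{p}(s)=p(s)/|p(s)|$ wherever $p(s)\ne 0$) produces a bounded, continuous $\dot{\mathcal{T}}(p)(t)$, so $\mathcal{T}(p)\in C^1[0,T]$.

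The core of the argument is the contraction estimate. Taking $p_1,p_2\in C^1[0,T]$,
\[
\mathcal{T}(p_1)(t)-\mathcal{T}(p_2)(t)=2\nu\sqrt{P_{max}}\int_t^T\!\!\int_0^\tau k(\tau)\,P(2\tau-(t+s))\,[\bar{p}_1(s)-\bar{p}_2(s)]\,ds\,d\tau.
\]
Since $P(2\tau-(t+s))$ is doubly stochastic on the region of integration, I would apply Lemma~\ref{lemma::normIneq} to the inner integral in $s$ to pick up the factor $\check{k}=\sup_{0\le t\le T}tk(t)$; the outer integral in $\tau$ then contributes a factor bounded by $\hat{k}=\sup_{0\le t\le T}\int_t^T\tau k(\tau)\,d\tau$. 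An entirely parallel calculation, applied to $\frac{d}{dt}[\mathcal{T}(p_1)-\mathcal{T}(p_2)]$ (obtained by differentiating under the integral and also picking up a boundary term at $\tau=t$), yields the complementary $L_\infty$ bound. Summing the two contributions gives
\[
\|\mathcal{T}(p_1)-\mathcal{T}(p_2)\|_{C^1}\le 2\nu\sqrt{P_{max}}\,(\check{k}+\hat{k})\,\|\bar{p}_1-\bar{p}_2\|_{C^1},
\]
and the threshold $\nu<1/[2\sqrt{P_{max}}(\check{k}+\hat{k})]$ then makes the prefactor strictly less than $1$. The Banach theorem supplies the unique fixed point $p^\star$ together with the convergence of the Picard iterates $p_{k+1}=\mathcal{T}(p_k)$ from any $p_0\in C^1[0,T]$.

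The delicate step is controlling $\|\bar{p}_1-\bar{p}_2\|_{C^1}$ by $\|p_1-p_2\|_{C^1}$. The normalization map $p\mapsto p/|p|$ is not globally Lipschitz: one only has $|\bar{p}_1-\bar{p}_2|\le 2|p_1-p_2|/\max(|p_1|,|p_2|)$, which blows up as the co-state approaches zero. I would handle this by restricting $\mathcal{T}$ to a closed ball $B_r=\{p\in C^1[0,T]:\|p-g\|_{C^1}\le r\}$ on which $|p(t)|$ is bounded below (using the non-degeneracy of $g$, guaranteed by the fact that the system has not yet reached consensus), verifying separately that $\mathcal{T}(B_r)\subseteq B_r$ for an appropriate $r$ under the same smallness assumption on $\nu$. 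With $\mathcal{T}$ restricted to $B_r$, the Lipschitz constant of $p\mapsto\bar{p}$ becomes a finite constant, and absorbing it into $\nu$ (or shrinking the admissible $\nu$ by that factor) recovers the stated threshold. This reduction — from a nonlinear operator to a genuine contraction on an invariant closed subset — is the main obstacle, after which Banach's theorem finishes the proof.
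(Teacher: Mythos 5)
Your proposal follows the same skeleton as the paper's proof: show $\mathcal{T}$ maps $C^1[0,T]$ to itself, bound $\vnorm{\mathcal{T}(y)-\mathcal{T}(z)}_{C^1}$ via Lemma~\ref{lemma::normIneq} (the boundary term of the $t$-derivative contributing the $\check{k}$ factor, the $L_\infty$ norm of the double integral contributing $\hat{k}$), and invoke Banach's contraction principle. The one place you genuinely diverge is the step you yourself single out as delicate: passing from $\vnorm{\bar{y}-\bar{z}}$ to $\vnorm{y-z}$. The paper disposes of this with the one-line assertion that $\vnorm{\bar{w}}_{L_\infty}\leq\vnorm{y-z}_{L_\infty}$ ``by the properties of similar triangles,'' an inequality that is false in general: radial projection onto the unit sphere expands distances without bound as its arguments approach the origin (take $y=\epsilon e_1$, $z=-\epsilon e_1$; the normalized difference has norm $2$ while $\vnorm{y-z}=2\epsilon$). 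The nonexpansiveness that ``similar triangles'' suggests holds only when both vectors have norm at least one, which is not guaranteed here. You are right to treat this as the main obstacle rather than a triviality.

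However, your proposed repair does not close the gap either. A ball on which $|p(t)|$ is bounded below \emph{uniformly in $t$} does not exist in this problem: the transversality condition forces $p(T)=0$ (indeed $g(T)=0$ and the integral term in $\mathcal{T}(p)(t)$ vanishes at $t=T$), so every iterate and the fixed point itself pass through the origin at the terminal time, where the normalization $p\mapsto p/|p|$ is singular. Moreover, even on a region where $|p(t)|\geq\delta>0$, the Lipschitz constant of the normalization (of order $1/\delta$) multiplies the contraction modulus, so your argument would establish the theorem only for $\nu$ below a strictly smaller threshold than the one stated, not the threshold as written. (A secondary point: differentiating the double integral in $t$ also produces an interior term involving $\partial_t P(2\tau-(t+s))=-AP(2\tau-(t+s))$, which both you and the paper leave out of the final constant.) In short, you and the paper run the same argument up to the same obstruction; the paper waves it away with an incorrect inequality, and your fix, while correctly identifying the issue, neither recovers the stated constant nor survives the terminal condition $p(T)=0$. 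A complete proof would need either to reformulate the iteration so the normalization is applied only to quantities bounded away from zero, or to accept a $\nu$-threshold depending on a lower bound for $|p(t)|$ away from $t=T$.
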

\begin{proof}
The theorem will follow if for this choice of $\nu$, the mapping $\mathcal{T}$ is a contraction. Consider two vectors $y(t),z(t) \in C^1[0,T]$ and let $\bar{y}(t),\bar{z}(t)$ be the corresponding normalized unit norm vectors. Let $\bar{w} = \bar{y}-\bar{z}$. Then
\begin{eqnarray*}
&&\hspace{-3mm}\frac{1}{2\nu\sqrt{P_{max}}}\vnorm{\mathcal{T}(y) - \mathcal{T}(z) }_{C^1} = \\
&&\hspace{-3mm}\sup_{0\leq t \leq T}k(t) \sup_{1\leq i \leq n} \left|\int_0^t\sum_{j=1}^nP_{ij}(t-s)\bar{w}_j(s)ds \right| \\
&&\hspace{-3mm}+ \sup_{0\leq t \leq T}  \sup_{1\leq i \leq n} \left| \int_t^Tk(\tau) \int_0^\tau \sum_{j=1}^nP_{ij}(2\tau - (t+s)) \bar{w}_j(s) ds d\tau\right| \\
&&\hspace{-3mm}\leq \sup_{0\leq t \leq T}tk(t) \vnorm{\bar{w}}_{L_\infty} + \sup_{0\leq t \leq T}  \sup_{1\leq i \leq n} \int_t^Tk(\tau) \\
&&\hspace{-3mm}\cdot \int_0^\tau \sum_{j=1}^nP_{ij}(2\tau - (t+s)) \left| \bar{w}_j(s)\right| ds d\tau ,
\end{eqnarray*}
where the last inequality follows from Lemma \ref{lemma::normIneq}. Using arguments similar to those used in proving Lemma \ref{lemma::normIneq}, we have:
\begin{align}
&\frac{1}{2\nu\sqrt{P_{max}}}\vnorm{\mathcal{T}(y) - \mathcal{T}(z) }_{C^1}\\\nonumber &\leq  \left(\sup_{0\leq t \leq T}tk(t) + \sup_{0\leq t \leq T} \int_t^T\tau k(\tau) d\tau\right)\vnorm{\bar{w}}_{L_\infty} \cr
&\leq (\check{k}+\hat{k})\vnorm{y - z}_{L_\infty} \leq 2\nu\sqrt{P_{max}}(\check{k}+\hat{k})\vnorm{y - z}_{C^1},
\end{align}
where the second inequality follows from the properties of similar triangles. We readily see that by selecting $\nu < \frac{1}{2\sqrt{P_{max}} (\check{k}+\hat{k})}$, the last inequality implies that $\mathcal{T}(p)(t)$ is a contraction mapping. Because $C^1[0,T]$ endowed with $\vnorm{.}_{C^1}$ is a Banach space, Banach's contraction principle guarantees the existence of a unique fixed point $p^\star(t) \in C^1[0,T]$ which can be obtained from the iteration $p_{k+1}(t) = \mathcal{T}(p_k)(t)$ as $k\to \infty$, for any initial point.
\end{proof}
\section{Numerical Results} \label{Simulations}
In this section, we provide a numerical example for \textsc{Attack-I}. We consider the complete graph with $n=4$. The matrix $A(0)$ is generated at random and is equal to
\[
A(0)=\left(\begin{array}{cccc}-2.1293 & 0.0326 & 0.5525 & 1.5442 \\ 0.0326 & -1.2191 & 1.1006 & 0.0859 \\ 0.5525 &  1.1006& -3.1447 & 1.4916 \\1.5442 & 0.0859 & 1.4916 & -3.1217\end{array}\right)
\]
We fix $\ell=2$, $T=2$, and $x_0 = [1,2,3,4]^T$ -- hence, $x_{avg} = 2.5$. We simulated the network using \textsc{Matlab}'s \textsc{Bvp Solver} and computed the optimal control using (\ref{eqn::OptCtrlMP}), which was found to be $u^\star(t) = [1,0,1,0,1,1]^T$ for $t \in [0,2]$. Indeed, at $t=0$, the highest $w_{ij}$ values are $w_{13}(0)= 2.2101$ and $w_{14}(0)=13.8979$ which confirms the conclusion of Thm \ref{thm::main}. In this particular example, $w_{13},w_{1,4}$ remain dominant throughout the problem's horizon, and hence the control is stationary. Fig. \ref{fig::advNoadv} simulates the network at hand with and without the presence of the adversary. Note that the adversary was successful in delaying convergence. Since both links the adversary broke emanate from node $1$, $x_1(t)$ is far from consensus.
\begin{figure}[!t]
\centering
\includegraphics[width=9.5cm]{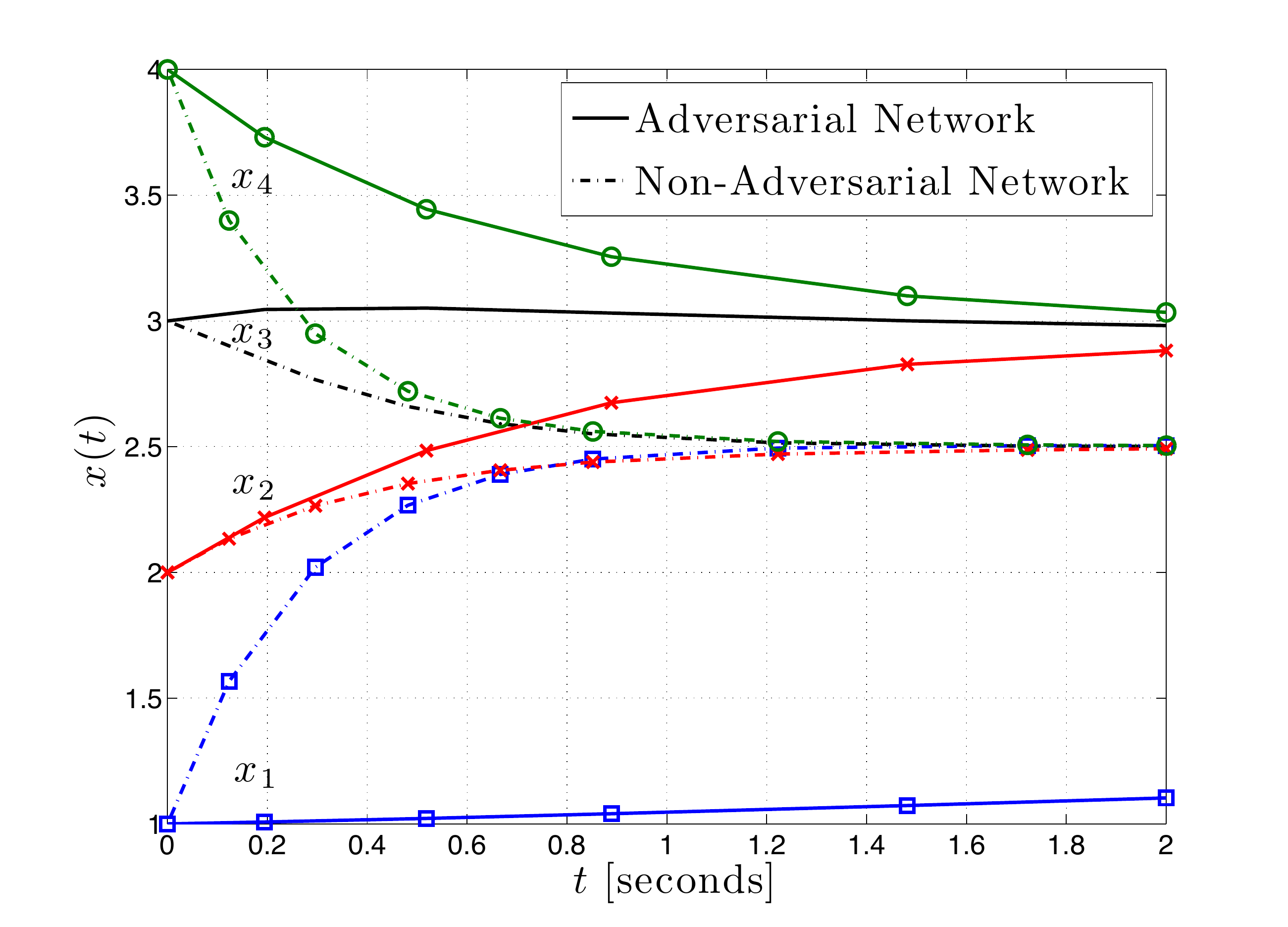}
\caption{Effect of \textsc{Attack-I} on the convergence to consensus. $T=2$, $n=4$, $\ell=2$, and $x_0 = [1,2,3,4]$.}
\label{fig::advNoadv}
\end{figure}
\section{Conclusion} \label{Conclusion}
We have considered two types of adversarial attacks on a network of agents performing consensus averaging. Both attacks have the common objective of slowing down the convergence of the nodes to the global average. \textsc{Attack-I} involves an adversary that is capable of compromising links, with a constraint on the number of links it can break. Despite the interdependence of the state, co-state, and control, we were able to find the optimal strategy. We also presented a potential-theoretic interpretation of the solution. In \textsc{Attack-II}, a finite power adversary attempts to corrupt the values of the nodes by injecting a signal of bounded power. We assumed that the adversary has sufficient energy $E_{max}$ to operate at maximum instantaneous power and derived the corresponding optimal strategy. It would be interesting to consider the case when $E_{max} < T\cdot P_{max}$, when the adversary cannot expend $P_{max}$ at each time instant. This will be explored in future work. 
\bibliographystyle{IEEEtran}
\bibliography{references}


\end{document}